\documentclass{article}
\usepackage[letterpaper,margin=1in]{geometry}
 \usepackage{blindtext}
 \usepackage{authblk}
 \usepackage{amsthm}
\usepackage{graphicx}
\usepackage{algorithm}
\usepackage{multirow}
\usepackage[unicode=true]{hyperref}
\usepackage[table]{xcolor}
\newtheorem{theorem}{Theorem}

\newcommand{\R}{{\mathbb R}}

\newcommand{\GL}{\operatorname{GL}}
\newcommand{\SL}{\operatorname{SL}}
\newcommand{\Trace}{\operatorname{Tr}}
\usepackage{blindtext}
\usepackage{algorithm}
\usepackage[noend]{algpseudocode}
\usepackage{pifont}
\usepackage{sidecap}
\usepackage{wrapfig}
\usepackage{lmodern}
\usepackage{enumerate}
\usepackage{enumitem}
\usepackage{comment}
\usepackage{blindtext}
\usepackage[mathscr]{euscript}

\def\be{\begin{equation}}
	\def\ee{\end{equation}}
\def\bes{\begin{eqnarray}}
	\def\ees{\end{eqnarray}}

\def\2{\frac{1}{2}}
\def\4{\frac{1}{4}}

\newcommand{\T}{\mathcal}

\newcommand{\m}[1]{{\bf{#1}}}
\newcommand{\wh}[1]{{\widehat{#1}}}

\newcommand{\mb}[1]{{\mathbb{#1}}}
\newcommand{\ms}[1]{{\mathscr{#1}}}
\newcommand{\mf}[1]{{\mathfrak{#1}}}

\usepackage{graphicx}
\newcommand{\junk}[1]{}
\usepackage[parfill]{parskip}
\usepackage{amsmath, amssymb}
\usepackage{booktabs}
\usepackage[makeroom]{cancel}
\usepackage{nicefrac}
\usepackage{listings}
\usepackage{comment}
\usepackage{float}
\usepackage{wrapfig}
\usepackage{afterpage}
\newtheorem{assumption}{Assumption}
\newtheorem{proposition}{Propositionn}
\newtheorem{definition}{Definition}

\newtheorem{lemma}[theorem]{Lemma}

\usepackage{amsmath}
\usepackage{enumitem}

\DeclareMathOperator*{\argmin}{arg\,min}
\begin{document}
% \firstpage{1}
% \subtitle{Subject Section}
\title{Coupled Tensor-Tensor Completion Method with Applications in Drug Repurposing
}
\author[1]{Maryam Bagherian}
\author[2]{Albert Hung}
\author[3,4]{Ivo Dinov}
\author[3,5]{Joshua Welch}

\affil[1]{\small Department of Mathematics \& Statistics, Idaho State University,
	Physical Science Complex | 921 S. 8th Ave., Stop 8085 | Pocatello, ID 83209}

\affil[2]{\small Computer Science \& Artificial Intelligence Laboratory, 
	Massachusetts Institute of Technology}

\affil[3]{\small Department of Computational Medicine \& Bioinformatics, 
	University of Michigan, Ann Arbor}

\affil[4]{\small Statistics Online Computational Resource (SOCR), 
	University of Michigan, Ann Arbor}

\affil[5]{\small Department of Computer Science \& Engineering, 
	University of Michigan, Ann Arbor}

\date{}

\maketitle
\noindent\textbf{Corresponding author:} Maryam Bagherian (\texttt{maryambagherian@isu.edu})

\abstract{Many biomedical challenges can be posed as tensor completion problems where the observed entries of a multidimensional array (a tensor) are used to impute the missing values. In such settings, incorporating side information about the modes of the tensor, such as gene-gene similarity, can significantly enhance the solutions of the completion problem. Most existing tensor completion methods can only incorporate side information in the form of matrices. In this study, we introduce a novel framework to incorporate side information in the form of tensors. Our new approach, called {{coupled tensor-tensor completion (CTTC)}}, leverages the hidden connections among multimodal tensors to improve tensor completion performance. In addition to practical utility, CTTC has theoretical foundations in distance metric learning and group theory. We derive an alternating algorithm to solve the CTTC optimization problem and establish its convergence to a stationary point. Finally, we show that CTTC outperforms state-of-the-art tensor completion methods at predicting drug effects. \\
	\textbf{Results:} Compared with other tensor completion methods, including HaLRTC, CTRC, Cell, and NTD-DR, CTTC demonstrates superior run-time and RSE tensor completion accuracy on two benchmark datasets, DTD and LINCS. \\
	{\small \textbf{Keywords:} Distance Metric Learning, Tensor Completion, Coupled Tensor Completion, Drug-gene-cell line effects, Drug-Target Interaction (DTI) prediction.}}
\maketitle

\section{Introduction \&  Related Literature}

Many important biomedical datasets can be represented as multidimensional arrays (tensors) in which only a small subset of the entries are known. For example, the effect of a drug on a particular gene in a particular cell type may be represented as a three-dimensional array (a three-mode tensor). But most of the entries in this tensor are unobserved, because measuring the effect of many drugs on many cell types and genes is very expensive and time-consuming. 

Over the last decade, tensor completion has become an important tool with applications in many fields, including biomedical science \cite{cichocki2015tensor}. Tensor-based learning has also found broad applications beyond completion problems, including multi-view learning and clustering, where low-rank tensor representations are used to capture complementary information across multiple data sources. Tensor completion methods were developed to address the issue of missing values, \cite{bagherian2021coupled}, to name but a few. Analogous to the well-known matrix completion problem \cite{davenport2016overview}, tensor completion aims to find a tensor that identically matches the known entries of a partially observed tensor and has low-rank structure~\cite{bi2020tensors}. 

Many types of tensor completion methods have been developed. Perhaps the most popular class of tensor completion methods uses tensor decomposition to reconstruct a tensor as the product of a small number of latent factors. Numerous tensor decomposition methods have been used in this way, including Tucker \cite{tucker64extension}, Canonical Polyadic (CP) \cite{carroll1970analysis}, higher-order SVD (HOSVD) \cite{de2000multilinear}, tensor train (TT) \cite{oseledets2011tensor}, and tensor SVD (t-SVD) \cite{kilmer2013third}. Another class of tensor completion methods are based on rank regularization with {trace norm}. The minimization of matrix trace norm (or nuclear norm) is equivalent to rank minimization under certain conditions \cite{candes2012exact}. 
Thus, minimizing the tensor trace norm provides a convenient way to enforce the low-rank constraint of tensor completion.

In some applications, side information about the entries of the incomplete tensor are available. In this case, the side information can be used to further regularize the tensor completion process, which often significantly improves completion accuracy. Intuitively, when some regularity information is present, one may utilize it in order to better predict the missing entries. This leads to a new category of tensor completion methods which is based upon auxiliary information. Some of these methods fall in the following categories:

(i) Metric learning approaches utilize similarity matrices as regularization terms into the objective function. 
The recent method proposed in \cite{bagherian2021coupled} is a pertinent example where additional information is available and used for a better prediction. As another example, one may refer to \cite{narita2012tensor} where the authors proposed two regularization methods called ``within-mode regularization" and ``cross-mode regularization" to incorporate auxiliary regularity information in the tensor completion problems. The key idea is to construct within-mode or cross-mode regularity matrices, incorporate them as smooth regularizers, and then combine them with a Tucker decomposition to solve the tensor completion problem.\\

(ii) Coupled matrix/tensor factorization methods use couple tensors or matrices for jointly factorizing and imputing missing data. Among proposed methods, one may refer to coupled matrix and tensor factorization (CMTF) by \cite{acar2014structure}. The method utilizes the CP decomposition for a tensor while benefiting from the coupled information in the form of matrices.  \\

(iii) Graph-regularized tensor completion approaches incorporate graph regularization terms into either tensor completion or tensor decomposition problems. For instance, authors in \cite{takeuchi2016graph} introduced a graph Laplacian-based regularizer and used it to induce latent factors that represent auxiliary structures. Li et al. \cite{li2021imputation} used a graph-regularized tensor completion model to impute spatial transcriptomic data. The model is regularized by a product of two chain graphs that intertwines the two modes of the tensor. Many additional related papers have used graph-based side information to regularize tensor completion or decomposition \cite{sofuoglu2020graph}, or graph attention networks to enable models to focus on the most relevant information \cite{zhao2025regulation}.

(iv) Other constraint models, such as coupled trace norm\cite{wimalawarne2018convex}, couple the nuclear norm \cite{wimalawarne2018efficient} and ensemble hybrid approaches based on the prior three strategies \cite{ge2016taper}. For instance,  in addition to similarity matrices, the proposed method in \cite{ge2016taper} also incorporates the cross-mode features  as coupled matrices in order to improve the coupled tensor completion task.

However, all of these existing methods for tensor completion with side information assume that the side information comes in the form of matrices. While this is helpful in many real-world settings, there are some applications where side information is in the form of more than one matrix per tensor mode. For example, when predicting the effect of drugs on genes across cell types (a drug by cell type by gene tensor), we may have access to many different kinds of drug-drug relationships, gene-gene relationships, and cell-cell relationships. This side information can be represented as three tensors, each coupled to one mode of the drug-gene-cell type tensor (Figure 1). Here, we develop a new tensor completion approach, coupled tensor-tensor completion (CTTC), to incorporate tensor side information.
\begin{figure}
	\centering
	\includegraphics[width=2.7in]{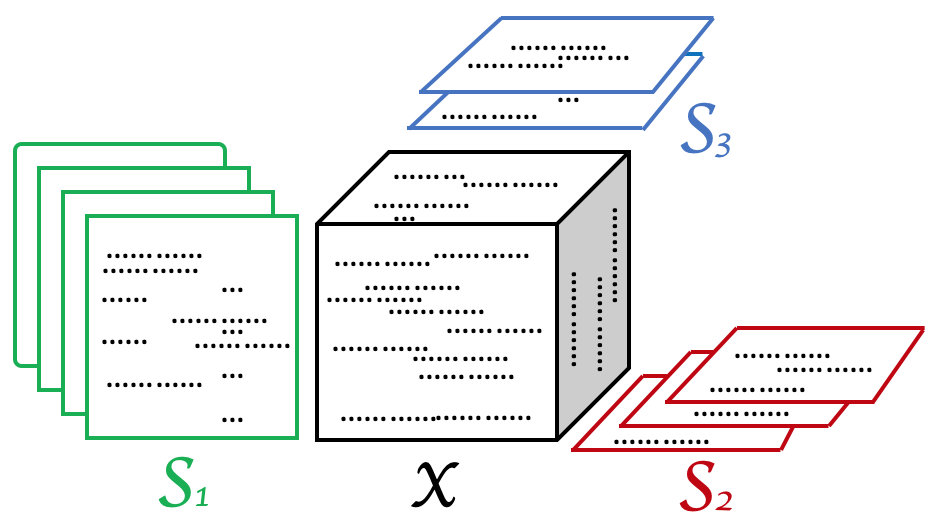}
	\caption{%
		An outline of the proposed method, CTTC. Given an unobserved/partially observed tensor $\T X$, chosen to be a three-way tensor for the sake of presentation, the proposed method aims at completing the tensor $\T X$ using similarity information provided along each mode, i.e. $\T S_1$, $\T S_2$, and $\T S_3$. As shown in the figure, the number of modes of the similarity arrays does not have to be the same as that of tensor $\T X$. The side information can be, too, under-observed and can be completed using the same method as a preprocessing step. 	}
	\label{fig:1}
\end{figure}

In order to improve the tensor completion task, while utilizing as many data relations as possible, this manuscript develops a penalized similarity based approach for coupled tensor completion in the presence of auxiliary information, also in the form of tensors. It is built upon our previous work \cite{bagherian2021coupled} for the matrix completion problem with tensor-valued side information. Building on our previously developed methods,  \cite{bagherian2022bilevel,bagherian2021coupled,bagherian2020machine,bagherian2024tensor}, we propose a penalized optimization problem for tensor completion using auxiliary information, called Coupled Tensor--Tensor Completion (CTTC), shown in Figure~\ref{fig:1}, based on a distance metric learning approach to improve the completion performance. We discuss the global convergence and demonstrate the advantages of CTTC compared to existing approaches on real datasets.

The remainder of the manuscript is organized as follows. Section~\ref{sec:BG} provides a brief background on the methodology. Section~\ref{sec:MF} introduces the CTTC problem and derives the corresponding algorithm, and the convergence analysis. The datasets and similarity tensors, are discussed in Section~\ref{sec:data} and the results are presented in Section~\ref{sec:ER}. Finally, Section~\ref{sec:conc} presents the conclusions and directions for future work.

\section{Background}\label{sec:BG}

\noindent{\textbf{Tensor Completion}}:
The goal of tensor completion is to estimate an unknown tensor ${\T{Z}} \in \mathbb R^{n_1\times n_2\times \cdots \times n_K}$ from an incomplete tensor $\T X$ where the set of indices for the observed entries is $ {\T X}_\Omega$. Throughout, we aim to find a low-rank approximation $\T Z$ that perfectly recovers the observed entries of $\T X$. Because computing the rank of $\T Z$ is intractable, rank is usually approximated as $\|\T{Z}\|_{\mu}$ with an appropriate choice of norm $\left\|\cdot\right\|_{\mu}$. Thus, tensor completion can be formulated as the following optimization problem: 
\begin{equation}
\label{eq:1}
\begin{aligned}
\min\limits_{ \T{Z} }&\quad \|\T{Z}\|_{\mu}\qquad 
\textrm{s.t.} \quad \m {\T X}_\Omega = {\T{Z}}_\Omega. 
\end{aligned}
\end{equation}

Here, we briefly introduce three main categories of tensor completion models involving non-convex optimization
problems:\\

(i)\noindent{\textbf{Tucker-Based Completion}}
One may utilize the Tucker model \cite{tucker1964extension} to perform tensor completion. Given a data tensor $\T X\in \mathbb R^{n_1\times n_2\times \cdots \times n_K}$, Tucker decomposition  approximates $\T X$ as the product of a core tensor $\T G\in \mathbb R^{\hat n_1\times \hat n_2\times \cdots\times \hat n_K}$ and one factor matrix $\m U^{(\ell)}\in \mathbb R^{\hat n_\ell\times n_\ell}$ for each tensor mode, where $\hat n_\ell \leq n_\ell$. This can be written as the following optimization problem:
\begin{equation}\label{eq:tucker}
\begin{aligned}
\min\limits_{ \Gamma_{\text t} } \quad  & \frac12\, \big{\|}\T X -\T G\times_1 \m {U}^{(1)}\times_2 \cdots \times_K \m {U}^{(K)}\big{\|}_F^2,\\
\textrm{s.t.} \quad & \m {\T X}_\Omega = {\T{Z}}_\Omega, 
\end{aligned}
\end{equation}
with $\Gamma_{\text t}:=\{\T X, \T G, \{\m{U}^{(\ell)}\}_{\ell=1}^K\}$, and $\T Z= \T G \times_1 \m U^{(1)} \times_2\cdots\times_K \m U^{(K)}$. One optimization approach is to use the block coordinate descent method to iteratively optimizing the problem. See %\cite{chen2019nonnegative,liu2023rank,zhang2016exact} for a few examples.\\

(ii)\noindent{\textbf{PARAFAC-Based Completion}}
Similarly, one may use PARAFAC (parallel factor analysis) model as:
\begin{equation}\label{eq:parafac}
\begin{aligned}
\min\limits_{ \Gamma_{\text p} } \quad  & \frac12\, \big{\|}\T X - \m {U}^{(1)}\circ \cdots \circ \m {U}^{(K)}\big{\|}_F^2, \\
\textrm{s.t.} \quad & \m {\T X}_\Omega = {\T{Z}}_\Omega, 
\end{aligned}
\end{equation}
where $\Gamma_{\text p}:=\{\T X,  \{\m{U}^{(\ell)}\}_{\ell=1}^K\}$, $\circ$ denotes the outer product and $\m U^{(\ell)}\in \mathbb R^{n_\ell \times R}$, for each $\ell=1, \cdots, K$ and for some positive $R\in \R$ as the number of component.
Note that Eq.~\eqref{eq:tucker} may be regarded as a more flexible PARAFAC model. In PARAFAC the core tensor is restricted to be \textit{super diagonal} \cite{kolda2009tensor}.
% See \cite{ashraphijuo2017fundamental,liu2019low} for a few example.\\

(iii)\noindent{\textbf{Matrix-Based Models}}
One may reshape the tensor as
multiple matrices and force the low-rank restriction on each unfolding matrix along each
mode of the tensor. Given a tensor $\T X\in \mathbb R^{n_1\times n_2\times \cdots \times n_K}$, let $\m X_{(\ell)}$ denote the $\ell$-mode matricization of tensor $\T X$, therefore, one may solve the following problem: 
\begin{equation}\label{eq:matrix}
\begin{aligned}
\min\limits_{ \Gamma_{\text m} } \quad  & \frac12\,\sum\limits_{\ell=1}^K \big{\|}\m X_{(\ell)}- \m M_\ell\big{\|}_F^2, \\
\textrm{s.t.} \quad &{\T X}_\Omega =  {\T{Z}}_\Omega, \quad \text{and} \quad \text{rank}(\m M_\ell) \le R, 
\end{aligned}
\end{equation}
for $\ell=1, \cdots, K$. Here, where $\Gamma_{\text m}:=\{\T X,  \{\m{M}_{\ell}\}_{\ell=1}^K\}$, and $\m M_\ell\in \R ^{n_\ell \times J}$, where $J=\prod_{k\neq \ell}^K n_k$.
% (See \cite{xu2013parallel,ji2016tensor} for a few examples).\\

\noindent{\textbf{Coupled Tensor Completion}}:
The goal of coupled tensor completion is to estimate an unknown tensor ${\T{Z}} \in \mathbb R^{n_1\times n_2\times \cdots \times n_K}$ from an undersampled/incomplete tensor $\T X$ where the set of indices for the observed entries is $\m \Omega_{\T X}$. Moreover, for each mode of tensor $\T X$, say $\ell$, where $\ell=1, 2, \cdots, K$, some similarity/auxiliary information in the form of a matrix, $\m S_\ell$ or a tensor, $\T S_\ell$ are coupled, similar to \cite{bagherian2021coupled, bagherian2022bilevel} where in the former the goal is to perform matrix completion using coupled information whereas in the latter a tensor is being recovered by using the existing side information or creating them.

\subsection{Distance Metric Learning}
The proposed method (see Section \ref{sec:cttc}) is constraints by Distance metric learning terms in order to effectively learn the similarities, as the inverse of distance, between any two interies. Distance metric learning \cite{suarez2018tutorial} aims to learn distances from the data, where distance refers to a map $d:A\times A\to \mb R_+$, where $A$ is a non-empty set, then for all $\m a, \m b, \m c\in A$, $d$ satisfies the following conditions: coincidence,  Symmetry, and triangle inequality.

Other properties such as \emph{non-negativity}, \emph{reverse triangle inequality}, and \emph{generalized triangle inequality} follow immediately from the definition above. 
Distance metric learning frequently focuses on learning Mahalanobis distances, since they
are parametrized by matrices, and therefore are computationally tractable. Mahalanobis distances satisfy additional properties, including \emph{translation invariance} and \emph{homogeneousness} \cite{suarez2018tutorial}. In a $K$-dimensional Euclidean space, we may form a family of metrics over the set $A$ by computing Euclidean distances after a linear transformation $\m a\to\m L(\m a)$, where $\m L$ is injective. Therefore, the squared distances can then be computed as $d(\m a,\m b)=\|\m L(\m a-\m b)\|^2_F$ which may also be expressed in terms of a positive semidefinite square matrix (i.e. $\forall \mathbf x\in X, \mathbf x^\top \mathbf M x\ge 0$) and $\m M = \mathbf L\mathbf L^\top$. This refers to as single-metric learning which learns a metric tensor, here $\mathbf M$, such that distances are measured as
$$\mathrm{dist}(\mathbf x_i,\mathbf x_j)=\|\mathbf x_j-\mathbf x_j\|^2_M\equiv (x_i-x_j)^\top \mathbf M (x_i-x_j). $$

If $\m L$ is also surjective, which results in $\m M$ being full rank, the matrix $\m M$ parametrizes the distance $d$. An example of matrix learning is when the matrix $\m M$ is referred to as a \emph{Mahalanobis} metric. In Gaussian distributions, the matrix $\m M$ plays the role of the inverse covariance matrix. The following definition gives the generalization of the Mahalanobis distance to multi-linear transformation.
\begin{definition}\label{def:maha}
Let $\T X\in \mathbb R^{n_1\times n_2\times \cdots\times n_K}$. Consider the multilinear transformation \\
$ \varphi:\mathbb R^{n_1\times n_2\times \cdots\times n_K}\to \mathbb R^{n_1\times n_2\times \cdots\times n_K}$, with
\begin{align}
\varphi(\T X)= \T X\times_1 \m L^{(1)}\times_2 \m L^{(2)}\cdots \times_K \m L^{(K)}, 
\end{align}
where the square matrices $\m L^{(\ell)}\in \mathbb R^{n_\ell\times n_\ell}$, for $\ell=1,\cdots, K$, are called the $\ell$-mode matrices. Here, if $\m L^{(\ell)}$, for $\ell=1, \cdots, K$ are orthogonal matrices, then $ \varphi$ recovers Euclidean distance.
\end{definition}

	The coupling term in CTTC is constructed from the tensor-valued Mahalanobis distance. Therefore, it is important to establish that this metric is stable with respect to perturbations in the auxiliary tensors. The following proposition provides a perturbation estimate showing that the induced metric varies continuously under small perturbations.

\begin{proposition}[Perturbation bound for the tensor Mahalanobis distance]
	\label{prop:perturb}
	Let $\T X,\T S,\T E\in
	\mathbb R^{n_1\times\cdots\times n_K}$, and let
	\[
	d_M(\T X,\T S)
	=
	\Big\|
	(\T X-\T S)
	\times_1\m L^{(1)}
	\cdots
	\times_K\m L^{(K)}
	\Big\|_F^2
	\]
	be the tensor Mahalanobis distance defined in
	Definition~\ref{def:maha}. Define
	\[
	C=\prod_{\ell=1}^K\|\m L^{(\ell)}\|_2.
	\]
	Then, for any perturbation $\T E$,
	\[
	\begin{aligned}
		&
		\left|
		d_M(\T X,\T S+\T E)
		-
		d_M(\T X,\T S)
		\right|
		\\
		&\le
		2C
		\sqrt{d_M(\T X,\T S)}
		\,\|\T E\|_F
		+
		C^2
		\|\T E\|_F^2.
	\end{aligned}
	\]
Consequently, on bounded subsets of the tensor space, the tensor Mahalanobis distance is locally Lipschitz continuous with respect to perturbations of the auxiliary tensor.
\end{proposition}
\begin{proof}
	Let
	\[
	\T A=
	(\T X-\T S)
	\times_1\m L^{(1)}
	\cdots
	\times_K\m L^{(K)},
	\]
	and
	\[
	\T B=
	\T E
	\times_1\m L^{(1)}
	\cdots
	\times_K\m L^{(K)}.
	\]
	Then
	\[
	d_M(\T X,\T S+\T E)
	=
	\|\T A-\T B\|_F^2,
	\]
	while
	\[
	\|\T A\|_F
	=
	\sqrt{d_M(\T X,\T S)}.
	\]
	
	Using the identity
	\[
	\bigl|
	\|\T A-\T B\|_F^2-\|\T A\|_F^2
	\bigr|
	\le
	2\|\T A\|_F\|\T B\|_F+\|\T B\|_F^2,
	\]
	together with the multilinear norm inequality
	\[
	\|\T B\|_F
	\le
	\prod_{\ell=1}^K
	\|\m L^{(\ell)}\|_2
	\,
	\|\T E\|_F
	=
	C\|\T E\|_F,
	\]
	we obtain
	\[
	\begin{aligned}
		\left|
		d_M(\T X,\T S+\T E)
		-
		d_M(\T X,\T S)
		\right|
		&=
		\bigl|
		\|\T A-\T B\|_F^2-\|\T A\|_F^2
		\bigr|\\
		&\le
		2\|\T A\|_F\|\T B\|_F+\|\T B\|_F^2\\
		&\le
		2C\sqrt{d_M(\T X,\T S)}\,\|\T E\|_F
		+
		C^2\|\T E\|_F^2,
	\end{aligned}
	\]
	which proves the desired perturbation estimate.
	\end{proof}

	Proposition~\ref{prop:perturb} shows that small perturbations in the auxiliary tensors produce proportionally small perturbations in the induced tensor Mahalanobis distance. This stability property provides theoretical justification for the use of tensor-valued side information within the CTTC framework.

Our work is inspired by relation regularized matrix factorization %\cite{li2009relation,narita2012tensor,song2019tensor} 
where two regularization methods called ``within-mode regularization'' and ``cross-mode regularization'' are used to incorporate auxiliary similarity among modes for tensor factorization. These
methods are all based on an expectation maximization (EM)-like approach combined with Tucker or PARAFAC decomposition. The key
idea is to construct within-mode or cross-mode similarity matrices and incorporate them as regularization terms:
\begin{equation}
  \label{eq:with}
    R_{\text{within}}(\T X; \m U^{(1)}, \cdots, \m U^{(N)})= \sum\limits_{n=1}^N \sum\limits_{i,j=1}^{I_n} \m S_n(i,j) \times \| \m U^{(n)}(i, :)- \m U^{(n)}(j, :)\|, 
\end{equation}
which can be simplified to $R_{\text{within}}(\T X; \m U^{(1)}, \cdots, \m U^{(N)})= \sum\limits_{n=1}^N\Trace(\m U^{(n)^\top} \m L_n \m U^{(n)}),$
and
\begin{equation}
    \label{ed:cross}
 R_{\text{cross}}(\T X; \m U^{(1)}, \cdots, \m U^{(N)})= \Trace\bigg(
    (\m U^{(1)^\top}\otimes \cdots\otimes\m U^{(N)^\top}) 
    \m L (\m U^{(1)}\otimes \cdots \otimes \m U^{(N)})\bigg ),
\end{equation}
where $\m U^{(n)}$ is the mode-$n$ latent matrix of PARAFAC or Tucker decomposition, $\m S_n$ is the mode-$n$ auxiliary
similarity matrix of size $I_n \times I_n$, $\mathbf L_n$ is the Laplacian matrix of $\mathbf S_n$ and $\mathbf L$ is defined as the Laplacian
matrix of $\mathbf S_1 \otimes \cdots \otimes \mathbf S_N$ , which is also called \emph{Kronecker product similarity} in the early work of \cite{kashima2009link}. In summary, DML in tensor completion is used to learn a meaningful similarity measure between data points, enabling more accurate predictions and reconstructions of missing entries in incomplete tensors. By optimizing the distance metrics, it helps in capturing the inherent relationships between elements, improving the completion process, particularly in high-dimensional data such as multi-way arrays. This approach enhances the ability to predict missing values by leveraging the structure of the data and the learned metrics to guide the completion algorithm effectively. 
	The representation-theoretic formulation underlying the proposed metric-learning framework, including the associated reductive group action and its relation to the Real Kempf--Ness theorem, is developed in detail in \cite{bagherian2026algebraic} and is not repeated here.

\section{CTTC Method}\label{sec:MF}

\label{sec:cttc} 
This method is an adoption of the method introduced in \cite{bagherian2026algebraic}. The proposed method utilizes Distance metric learning in order to effectively learn the similarities, as the inverse of distance, between any two entries. Distance metric learning \cite{suarez2018tutorial} aims to learn distances from the data, where distance refers to a map $d:A\times A\to \mb R_+$, where $A$ is a non-empty set, then for all $\m a, \m b, \m c\in A$, $d$ satisfies the following conditions: coincidence,  Symmetry, and triangle inequality.
Other properties such as \emph{non-negativity}, \emph{reverse triangle inequality}, and \emph{generalized triangle inequality} follow immediately from the definition above. 
Distance metric learning frequently focuses on learning Mahalanobis distances, since they
are parametrized by matrices, and therefore are computationally tractable. Mahalanobis distances satisfy additional properties, including \emph{translation invariance} and \emph{homogeneousness} \cite{suarez2018tutorial}.
Let the data tensor $\T X\in  \mb R^{n_1\times n_2\times n_3}$ be given. For any fixed mode, we assume that
the following symmetric tensors, in form of auxiliary information, are given:
\begin{equation}
    \begin{cases}
    \T S^{(1)}\in S^2X_{n_1}\otimes V_1, \\
    \T S^{(2)}\in S^2X_{n_2}\otimes V_2,\\
    \T S^{(3)}\in S^2X_{n_3}\otimes V_3,
    \end{cases}
\end{equation} 
such that they are coupled with an incomplete tensor $\T X$. Here, for $\ell=1,2,3,$ $S^2X_{n_\ell}$'s denote the symmetric power of $X_{n_\ell}$ and $V_\ell$'s are some arbitrary vector spaces over $\mb R$. This allows the method to utilize any size of arrays to represent available auxiliary information.
Without loss of generality, one may assume that $\T S^{(1)}$, $\T S^{(2)}$, and $\T S^{(3)}$ are nonnegative definite. For each fixed $\ell=1,2,3$, the $j$th mode matricization of tensor $\T S^{(\ell)}$ is a symmetric matrix denoted by $\m S^{(\ell)}_{(j)}$ .
The space of symmetric $n_\ell\times n_\ell$ matrices can be identified with the space
of symmetric tensors $S^2X_{n_\ell}\subseteq X_{n_\ell}\otimes X_{n_\ell}$.

Let $\GL(\cdot)$ and $\SL(\cdot)$ denote the general linear group and special linear group, respectively. The symmetric group $\GL(X_{n_\ell})$ acts on a symmetric matrix $\m B$ by the action defined as:
$$\m A\cdot \m B\to \m A\m B\m A^\top,$$
for $\m A\in \GL(X_{n_\ell}) $ and $\m B \in X_{n_\ell}\otimes X_{n_\ell}$.
Suppose that the only known entries of $\T X$ are at positions 
$$\Omega_{n_1n_2n_3}=((i_1,j_1,k_1),(i_2,j_2,k_2),\dots,(i_m,j_m,k_m)).$$
This 
constraint can be written as $\omega_{n_1n_2n_3}(\T Z)=\nu_{m}$ where $\nu_m\in \R^m$ is some fixed vector, and 
$$\omega_{n_1n_2n_3}:\ms X\to \R^m,$$ maps a tensor $\T B\in \ms X$ to
the vector \\$(\T B_{i_1,j_1,k_1},\T B_{i_2,j_2,k_2},\dots,\T B_{i_m,j_m,k_m})^\top\in \R^m$.
One may use the similarity tensors $\T S^{(1)}$, $\T S^{(2)}$ and $\T S^{(3)}$ as regularization of the tensor completion problem of tensor $\T X$. 

\subsection{CTTC Optimization Problem}\label{subsec:cttco}
Let $\T X\in \mb R^{n_1\times n_2\times n_3}$ be the data tensor given. For any fixed mode, say $\ell$, $\ell=1,2,3$, there exists a similarity tensor denoted by $\T S^{(\ell)}$. The goal is to find the unknown tensor $\T Z$, that matches the known entries of tensor $\T X$; The objective function to minimize is therefore:
\begin{equation}\label{eq:main}
\begin{aligned}
\min\limits_{\Gamma}  & \, 
F(\T Z):=\frac12\big{\|}\T Z \times_1 \m U^{(1)}\times_2 \m U^{(2)}\times_3   \m U^{(3)}\big{\|}_F^2
% \\\quad  &\qquad\quad
+\sum\limits_{\ell=1}^3 J_\ell(\m U^{(\ell)})\\ \quad  &
\qquad\quad+\sum\limits_{\ell=1}^3 \varrho_\ell\,  \sum\limits_{j=1}^{\text{num}_\ell}\,  (\m D_\ell)_{j,j}\,
\Trace\left (\m U^{(\ell)}\, \m S^{(\ell)}_{(j)}\, \m U^{{(\ell)^\top}}\right ), \\
\textrm{s.t.}  & \, \omega_{n_1n_2n_3}(\T Z)=\nu_{m}, 
\end{aligned}
\end{equation}
where $\Gamma=\{\T Z,\m U^{(1)}, \m U^{(2)}, \m U^{(3)}\}$, with $\m U^{(1)}\in \SL_{n_1}$, $\m U^{(2)}\in \SL_{n_2}$, $\m U^{(3)}\in \SL_{n_3}$, and $\T Z\in \R^{n_1\times n_2\times n_3}$, for some regularization parameters $\varrho_\ell$, and penalty functions $J_\ell(\cdot)$, for $\ell=1,2,3$. Here, $\m S^{(\ell)}_{(j)}$ is the $j$-th mode matricization of tensor $\T S^{(\ell)}$. Moreover, the objective function given in Problem~\eqref{eq:main} also minimizes all the number of layers added to $\T S^{(\ell)}$, denoted as $\text{num}_\ell$, for each layer.  Additional transformations $\m D_\ell$ for $\ell=1,2,3$, which are diagonal matrices with unit determinant and positive entries on the diagonal, are also adopted. If the auxiliary tensors $\T S^{(\ell)}$ have missing elements, certain constraints are adopted in addition to the one which assumes that all 
the slices are nonnegative definite. The matrices $\m U^{(i)}$ here define the Mahalanobis distance as given in Definition~\ref{def:maha} as a form of DML. Equivalently, $F(\T Z)$ in Problem~\ref{eq:main} can be written as:  
\begin{equation}\label{eq:main:alt}
	F(\T Z):=\frac12\sum\limits_{\ell=1}^3\big{\|}\m U^{(\ell)}\m Z_{(\ell)} \bigotimes\limits_{t\neq \ell}^3\m U^{(t)^\top}\big{\|}_F^2.
\end{equation}

The optimization algorithm follows an alternating minimization scheme. Each of the matrices, $\m U^{(\ell)}$, $\ell=1,2,3$, is being optimized while the other two are held fixed. As for the penalty functions over each of these matrices, we have imposed the trace norm as a tight convex approximation for the rank of a matrix. It is shown, theoretically, that for matrices and under certain conditions, rank minimization problem can be translated to solving the convex optimization problem of minimization of the trace norm over the given
affine space \cite{candes2012exact}. One may generalize the definition of the trace norm for matrices to tensors as a convex combination of
the trace norms of all matricization along each mode. Therefore, in essence, the trace norm penalty function can also be imposed over tensor $\T Z$.

An schematic overveiw of the model is provided in Figure~\ref{fig:model}. 

\begin{figure}
	\centering
	\includegraphics[width=0.8\linewidth]{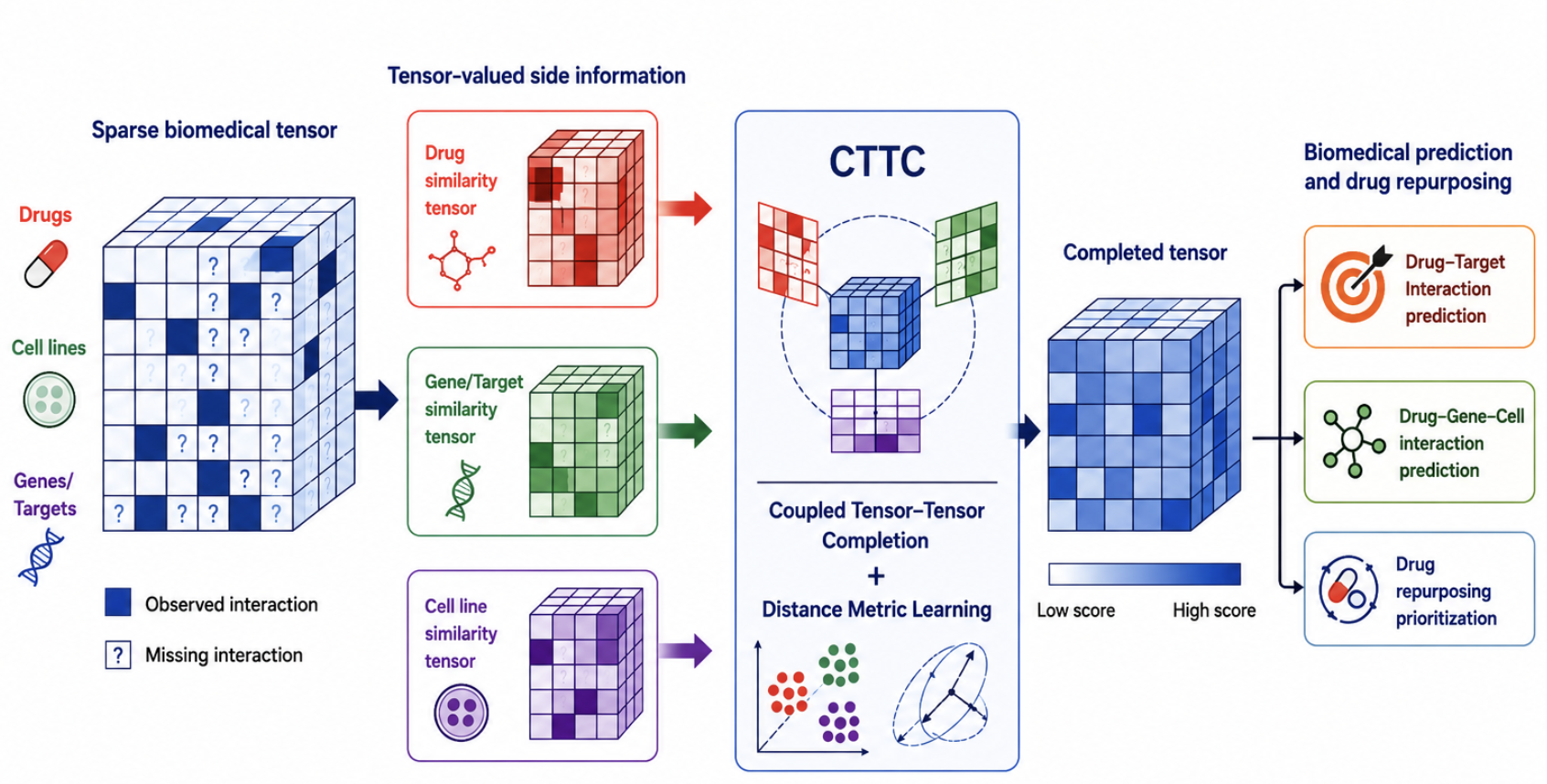}
	\caption{%
		Overview of the proposed Coupled Tensor--Tensor Completion (CTTC) framework. CTTC integrates a sparse primary biomedical tensor with tensor-valued side information representing drug, gene/target, and cell-line similarities. The framework jointly performs tensor completion and distance metric learning to recover missing interactions and generate predictions for downstream biomedical applications, including drug--target interaction prediction, drug--gene--cell interaction prediction, and drug-repurposing prioritization.	}
	\label{fig:model}
\end{figure}

\subsection{Convergence Analysis}\label{sec:con.a}
In this section, we outline the conditions under which Problem~\eqref{eq:main} converges to a stationary point. For any proper, lower semi-continuous function $\mf f : \mb E \rightarrow (-\infty, \infty]$, we let $\partial_L \mf f : \mb E \rightarrow 2^{\mb E}$ denote the \textit{limiting subdifferential} of $\mf f$; see~\cite[Definition 8.3]{rockafellar2009variational}.
We consider the following minimization problem: 
\begin{equation}\label{eq:ca:1}
\min\limits_{\Gamma} \big{\{}F(\T Z)\equiv f(\T Z)+h(\T Z)+g_1(\m U^{(1)})+ g_2(\m U^{(2)})+g_3(\m U^{(3)})\big{\}}, 
\end{equation}
where:
\begin{enumerate}
    \item The functions $\mf f : \mb E \to (-\infty, \infty]$, where $\mf f=f,h,g_\ell$, $\ell=1,2,3$, are proper semi-continuous and subdifferentiable over their domain, 
\item The function $f$ is a continuously differentiable convex function over $\text{Dom}(h) \times\text{Dom}(g_1) \times \text{Dom}(g_2) \times\text{Dom}(g_3)$. 
\end{enumerate}
We consider the following assumption: 
\begin{assumption}\label{assump:A}
Considering Problem~\eqref{eq:main}:
\begin{enumerate}
\item $J: \R^{n_{1} \times n_{2} \times n_{3} } \rightarrow \left(-\infty , \infty\right]$  are proper and lower semi-continuous such that\\ $\inf_{\R^{n_{1} \times n_{2} \times n_{3} }}J > -\infty$, 
    \item $F(\T Z): \R^{n_{1} \times n_{2} \times  n_{3}} \rightarrow \R$ is differentiable and $\inf_{\R^{n_{1} \times n_{2} \times n_{3}}} F > -\infty$.
    \item  The gradients $\nabla F(\T Z)$ is Lipschitz continuous with moduli $L_F$, i.e.,
\begin{equation*}
\|\nabla F(\T Z^1) - \nabla F(\T Z^2)\|_F^2 \leq L_F\| \T Z^1 - \T Z^2 \|_F^2, \quad \forall \quad \T Z^{1}, \T Z^{2}.
  \end{equation*} 
\end{enumerate}
\end{assumption}
Here, we adopt an alternating optimizer to solve the optimization problem. The ability to employ the alternating minimization method relies on the capability of computing minimizers with respect to each of the blocks.  Here, we use the results of Bertsekas \cite{bertsekas1999nonlinear}  stating that if the minimum with
respect to each block of variables is \emph{unique}, then any accumulation point of the sequence
generated by the method is also a stationary point. In order to establish the convergence results for Problem~\eqref{eq:main}, we first show that the problem is indeed well-defined, in the most general sense. For any $\ell=1,2,\cdots, K$, we consider the Euclidean space $\mb E=\mb E_1\times\mb E_2\times \cdots\times \mb E_K,$ and the set of linear transformation $\{f_\ell\}_{\ell=1}^K$ , where $f_\ell: \mb E_\ell\to \mb E$, mapping all but $\ell$-th block to zero. We now have the following lemma:
\begin{lemma}[Well-definedness]\label{lem:wd}
Suppose $F:\mb E\to (-\infty, \infty]$ is a proper and closed function. Further, we assume
that $F$ has bounded level sets; i.e. the set $\textup{Level}(F,\mu):=\{\T Z\in \mb E: F(\T Z)\le \mu\}$ is bounded for any $\mu \in \mb R$. Then the function $F$ has at least one minimizer, and for any $\bar{\T Z}\in \text{Dom}(F)$, the problem:
\begin{equation}
\label{eq:lem:wd}
\begin{aligned}
\min\limits_{ {\m Y}_\ell\in \mb E_\ell }&\quad F\left(\bar{\T Z}+f_\ell({\m Y}_\ell-\bar{\m Z}_\ell)\right),
\end{aligned}
\end{equation}
for $\ell=1,2,\cdots, K$, possesses a minimizer.
\end{lemma}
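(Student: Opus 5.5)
The plan is a standard Weierstrass-type argument: closedness of $F$ gives lower semicontinuity and closed level sets, and the bounded-level-set hypothesis gives compactness in the finite-dimensional space $\mb E$.

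\emph{Step 1 (global minimizer).} Since $F$ is proper, pick $\T Z_0\in\text{Dom}(F)$ and set $\mu_0:=F(\T Z_0)<\infty$. The set $\textup{Level}(F,\mu_0)$ is nonempty because it contains $\T Z_0$. It is closed because $F$ is closed, equivalently lower semicontinuous, so all its sublevel sets are closed. It is bounded by hypothesis, and hence compact since $\mb E$ is finite dimensional. A lower semicontinuous function attains its infimum on a nonempty compact set. To see this directly, take a minimizing sequence in the level set, extract a convergent subsequence, and apply $\liminf F(\T Z_k)\ge F(\lim \T Z_k)$. This gives a point $\T Z^*$ with $F(\T Z^*)=\inf_{\textup{Level}(F,\mu_0)}F$. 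Points outside the level set have $F>\mu_0\ge F(\T Z^*)$, so $\T Z^*$ is a global minimizer. Because $F$ is proper, $F>-\infty$ everywhere, so the infimum is finite.

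\emph{Step 2 (block subproblems).} Fix $\bar{\T Z}\in\text{Dom}(F)$ and $\ell$. Define $\phi_\ell(\m Y_\ell):=F\big(\bar{\T Z}+f_\ell(\m Y_\ell-\bar{\m Z}_\ell)\big)$ on $\mb E_\ell$. The map $T_\ell:\m Y_\ell\mapsto \bar{\T Z}+f_\ell(\m Y_\ell-\bar{\m Z}_\ell)$ is affine and continuous, and it replaces the $\ell$-th block of $\bar{\T Z}$ by $\m Y_\ell$. We then check that $\phi_\ell$ satisfies the Step 1 hypotheses on $\mb E_\ell$:
\begin{itemize}
\item $\phi_\ell$ is proper, since $\phi_\ell(\bar{\m Z}_\ell)=F(\bar{\T Z})<\infty$ and $\phi_\ell>-\infty$.
\item $\phi_\ell$ is closed, because it is a lower semicontinuous function composed with a continuous map.
\item $\phi_\ell$ has bounded level sets. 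For any $\mu$, $T_\ell(\textup{Level}(\phi_\ell,\mu))\subseteq \textup{Level}(F,\mu)$, which is bounded. Moreover $\|T_\ell(\m Y_\ell)-T_\ell(\m Y'_\ell)\|=\|\m Y_\ell-\m Y'_\ell\|$, since $f_\ell$ is an isometric embedding of a block. So bounded images force bounded preimages, and $\textup{Level}(\phi_\ell,\mu)$ is bounded.
\end{itemize}
Applying Step 1 to $\phi_\ell$ then yields a minimizer of \eqref{eq:lem:wd}.

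\emph{Main obstacle.} The only step needing care is the transfer of level-set boundedness to $\phi_\ell$, which is the third item above. It relies on interpreting $f_\ell$ as the canonical injection of $\mb E_\ell$ into the $\ell$-th block of $\mb E$, so that $f_\ell$ is injective and norm-preserving. The paper's phrasing "mapping all but $\ell$-th block to zero" should be read this way. Once that is fixed, the rest is the routine Weierstrass argument.
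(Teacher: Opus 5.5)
Your proposal is correct and follows the paper's route. The paper, following Beck's Lemma~14.1, restricts to the nonempty compact sublevel set $\textup{Level}(F,F(\widehat{Z}))$, applies Weierstrass, and then says the same argument works for the block function. The only difference is that you explicitly check that $\phi_\ell$ is proper, closed, and has bounded level sets, using injectivity and norm preservation of the block embedding $f_\ell$, whereas the paper simply asserts these properties.
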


\begin{proof}
For $\wh{\T Z}\in \text{Dom}(F)$, 
\begin{equation}
    \argmin\limits_{\T Z\in \mb E} F(\T Z)= \argmin\limits_{\T Z\in \mb E} \big{\{}F(\T Z):\T Z\in \textup{Level}(F, F(\wh{\T Z})\big{\}}.
\end{equation}
Since $F$ is closed with bounded level sets, then $\textup{Level}(F, F(\wh{\T Z})$ is compact. It follows from Weierstrass theorem that the problem of minimizing $F$ over $\textup{Level}(F, F(\wh{\T Z})$, and therefore over the entire space possesses a minimizer. The same argument applied to the function defined in Eq.~\eqref{eq:lem:wd}, as a proper and closed function with
bounded level sets.
\end{proof}

\begin{theorem}[Stationarity of accumulation points]\label{thm:con}
	Suppose Assumption~\ref{assump:A} holds, the sequence generated by
	the model is bounded, and the minimum of each block subproblem
	is uniquely attained. Then every accumulation point of the sequence
	\[
	\Gamma_k=
	\bigl(\T Z_k,\m U_k^{(1)},\m U_k^{(2)},\m U_k^{(3)}\bigr)
	\]
	is a stationary point of Problem~\eqref{eq:main}.
\end{theorem}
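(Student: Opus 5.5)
The plan is to follow the classical block coordinate descent argument of Bertsekas~\cite{bertsekas1999nonlinear}, specialised to the four blocks $\T Z,\m U^{(1)},\m U^{(2)},\m U^{(3)}$.

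First, I would record that the objective values $F(\Gamma_k)$ are monotonically nonincreasing along the iterations. Each block update is an exact minimization, and Lemma~\ref{lem:wd} guarantees that these minimizers exist. Assumption~\ref{assump:A} (items 1--2) bounds $F$ from below, so $F(\Gamma_k)\to F^\ast$ for some finite $F^\ast$. Because the sequence is bounded, it has accumulation points.

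Next, fix an accumulation point $\bar\Gamma=(\bar{\T Z},\bar{\m U}^{(1)},\bar{\m U}^{(2)},\bar{\m U}^{(3)})$ and a subsequence $\Gamma_{k_j}\to\bar\Gamma$. Denote by $\Gamma_k^{(i)}$ the intermediate points obtained after updating the first $i$ blocks. The key intermediate claim is that $\Gamma_{k_j}^{(i)}-\Gamma_{k_j}\to 0$ for every $i$, so that all intermediate points converge to $\bar\Gamma$ along the subsequence.

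I would prove this claim by contradiction, using uniqueness of the block minimizers. Suppose, say, the first block update does not vanish. Pass to a further subsequence on which the updated block converges to some $\hat{\T Z}\neq\bar{\T Z}$; boundedness makes this possible. By monotonicity, $F(\Gamma^{(1)}_{k_j})$ is squeezed between consecutive objective values, so $F(\hat{\T Z},\bar{\m U}^{(1)},\bar{\m U}^{(2)},\bar{\m U}^{(3)})=F^\ast=F(\bar\Gamma)$ by continuity (item 2). Block optimality at step $k_j$ gives $F(\Gamma^{(1)}_{k_j})\le F(\T Y,\m U^{(1)}_{k_j},\dots)$ for all $\T Y$. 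Passing to the limit shows that $\hat{\T Z}$ minimizes the first block at $\bar\Gamma$. Convexity of $f$ in this block then makes the midpoint $\tfrac12(\hat{\T Z}+\bar{\T Z})$ no worse either; this is the device used in Bertsekas' proof. Finally, the monotone sandwich also forces $\bar{\T Z}$ to be a block minimizer, which contradicts uniqueness. Repeating the argument block by block yields the claim.

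With all intermediate sequences converging to $\bar\Gamma$, block optimality in the limit gives, for each block $\ell$, that $\bar\Gamma$ minimizes $F$ in that block with the other blocks fixed. For the smooth part, continuity of $\nabla F$ (item 3, Lipschitz gradient) gives vanishing partial gradients. For the nonsmooth penalties $J_\ell$, closedness of the limiting subdifferential~\cite[Definition 8.3]{rockafellar2009variational} gives $0\in\partial_L$ of that block. Blockwise stationarity then assembles into $0\in\partial_L F(\bar\Gamma)$, since the nonsmooth terms are separable across blocks.

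The main obstacle is this last passage and the uniqueness argument, because the $\m U^{(\ell)}$ live on the nonconvex manifold $\SL_{n_\ell}$. Two consequences follow. The "stationary" notion must be interpreted relative to the constraint set, by adding its indicator to $g_\ell$. And the midpoint argument needs convexity per block, which fails on $\SL$. I would try to avoid needing convexity by extracting the subsequence limit directly: the squeeze $F(\Gamma^{(i)}_{k_j})\to F^\ast$, combined with uniqueness, identifies the limit of the $i$-th updated block with the unique minimizer at $\bar\Gamma$, and hence forces it to equal $\bar\Gamma$'s block.
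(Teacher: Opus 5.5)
Your proposal is correct and follows the same route as the paper. The paper's proof simply invokes Bertsekas' Proposition~2.7.1 (and Beck's Theorem~14.3) for alternating minimization with uniquely attained block minima, and you are in effect reproducing that argument (squeeze, then uniqueness, then coalescence of the intermediate iterates, then blockwise minimality, then stationarity). Your closing remark is the right way to finish it: because the whole sequence $\Gamma_k$ is assumed bounded, the squeeze $F(\Gamma^{(i)}_{k_j})\to F^\ast$ together with uniqueness already gives the contradiction. The midpoint/convexity step can therefore be dropped, and that is what keeps the argument valid on the nonconvex sets $\SL_{n_\ell}$, where the convex-block hypothesis of the cited proposition does not hold.
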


 \begin{proof}
 The Problem~\eqref{eq:main} is well-defined and the sequence $\{\Gamma_k\}$ is non-empty and bounded based on Lemma~\eqref{lem:wd}. Proof follows from Proposition~{[2.7.1]} in \cite{bertsekas1999nonlinear} (also see Theorem~{[14.3]} in \cite{beck2017first}). Based on Proposition~{[2.7.1]}, if a minimum of each subproblem of an alternating minimizer is uniquely attained, then any limit point of the sequence $\{\Gamma_k\}$ is a stationary point.
\end{proof}

Lastly, it is noteworthy that the only result available on the \emph{rate of convergence}
of the alternating method under general convexity assumptions (and not strong convexity) is the result
in \cite{beck2013convergence} showing a sublinear rate of convergence, and that the multiplicative constant depends
on the minimum of the block Lipschitz constants. However, this result is limited in
the sense that it only holds for unconstrained problems with a smooth objective function, and therefore does not apply to our constrained problem. 
%%%%%%%%%%%%%%%%%%%%%%%%%%

%%%%%%%%%%%%%%%%%%%%%%%%
\section{Datasets and Similarity Tensors}\label{sec:data}

\subsection{Datasets}
We demonstrate the real-world utility of CTTC using two datasets, both related to drug repurposing. The idea of drug repurposing is to learn from a small number of contexts in which drugs have been previously observed, then use tensor completion to predict how drugs will perform in new contexts. For example, we could use tensor completion to identify new cell lines in which a drug could prove effective based on similarity between the new cell line and previous cell lines that we have profiled. This allows drug repurposing, in which the same drug can be applied in new contexts. Lastly, any additive noise can be effectivly removed using the duality of the spectral and nuclear Schatten norms of tensor as proposed in \cite{bagherian2024tensor}. To this end, we investigate the following datasets.\\
\noindent\textbf{\noindent {DTD}}: The Drug-Target-Disease (DTD), provided in \cite{jamali2022ntd}, combined public data from multiple sources to construct a drug-target association matrix. The data were downloaded from DrugBank \cite{wishart2006drugbank}, UniProt \cite{uniprot2018uniprot}, and SuperTarget \cite{gunther2007supertarget}. There are three association matrices which ultimately form the main three-way tensor, drug-target-disease, with $\T X\in \mb R^{I\times J\times K}$
%with $810\times 302\times 542$ 
$114,319$ triplet associations. The drug-target association matrix, $\m A_{\text{CT}}\in \mathbb R^{I\times J}$, consists of $13,898$ associations. The authors downloaded drug-disease associations from Online Mendelian Inheritance in Man (OMIM) \cite{amberger2019omim} and the Comparative Toxicogenomics Database (CTD) \cite{davis2021comparative}, and $\m A_{\text{CD}}\in \mathbb R^{I\times K}$ is constructed with $550319$ known associations. Moreover, $14730$ associations between targets and diseases are retrieved from the Comparative Toxicogenomics Database, OMIM, Uniprot, DisGeNET \cite{pinero2020disgenet}, and GAD \cite{becker2004genetic} to construct $\m A_{\text{TD}}\in \mathbb R^{J\times K}$. 
%\item \noindent\textbf{Data} 
The similarity matrices along each mode have been extended to similarity tensors whose third modes are formed by averaging over the values of first and second modes. The final tensor is of size  $810\times 302\times 542$, with modes corresponding to chemicals, targets, and diseases (see Figure.~\ref{fig:data}). Each element of the tensor is a drug that targets a gene in a disease.\\
\noindent\textbf{LINCS}: Library of Integrated Network-based Cellular Signatures \footnote{\url{https://lincs.hms.harvard.edu/db/}} is a dataset of gene expression levels across multiple cell lines after a variety of drug treatments. Specifically, the target tensor is created by using the level 5 data from the Broad Institute LINCS Phase 1 L1000 dataset. The drug similarity tensor is created by using two of the drug matrices from the previous method. An additional layer of similarity information is obtained from ChemicalChecker \cite{duran2020extending}. A dictionary maps the LINCS drug information to their corresponding DrugBank \cite{wishart2006drugbank} counterpart, ensuring that each index is aligned. The gene similarity tensor is obtained by isolating the intersection of the genes present in all of the datasets. Ultimately, there were five sources of information for the gene similarity tensor, including two similarity matrices from the previous data, one matrix from the NTD-DR data \cite{jamali2022ntd}, one matrix from STRING \cite{szklarczyk2021string}, and one matrix from the Regnetwork \cite{liu2015regnetwork}. Lastly, the cell line similarity information has been obtained from the attribute similarity matrix provided in \href{https://maayanlab.cloud/Harmonizome/dataset/LINCS+Kinativ+Kinase+Inhibitor+Bioactivity+Profiles}{LINCS Kinativ Kinase Inhibitor Bioactivity Profiles}. The final tensor is of size $6780\times 70\times 1104$, where the modes are drugs, cell lines, and genes (see Figure.~\ref{fig:data} (left)). Each element of the tensor indicates the expression level of a gene in a cell line treated with a drug.\\

 \subsection{Tensor Similarity Calculations} The similarity information between vector entries $x$ and $y$ is calculated by $S(x, y) = \frac{1}{1 + d(x, y)}$, where $d$ is a suitable distance function, e.g. the Euclidean distance. The vector entries $x$ and $y$ are obtained from the provided databases detailed below. For the full adjacency tensors, the entries were min-max normalized of $[0, 1]$ and the missing values were imputed with $-1$.

$25$ layers of chemical similarity information was calculated from \textit{Chemical Checker}, encompassing chemical properties (2D and 3D fingerprints, scaffold, structure, and physiochemical), chemical targets (mechanisms of action, metabolic genes, crystals, bindings, HTS bioassays), networks (small molecule roles and pathways, signaling pathways, biological processes, and interactome), cell information (impact on gene expression, cancer cell lines, chemical genetics, morphology, and cell bioassasy), and clinical information (therapeutic assays, indications, side effects, disease and toxicology, and drug-drug interactions). 
Drug-drug similarities, similar to \cite{bagherian2021coupled}, are calculated from DrugBank, which contains drug target information, from the Morgan Fingerprint score, \cite{rogers2010extended} provided by the RDKit Python package \cite{rdkit}, which contains structural information, and the topological torsion score, \cite{nilakantan1987topological} also provided by the RDKit Python package, which represents structural information. For more details we refer the reader to \cite{bagherian2021coupled}.

The cell-cell similarities represent the percent inhibition of Kinase by small molecules in different cell lines (from the LINCS dataset), and the cell line ontology using \textit{Hamming distance}, as measure of the difference between two strings of equal length. For gene-gene interactions, on the other hand, the STRING database \cite{szklarczyk2019string} incorporated both physical and functional information of proteins. Regnetwork \cite{liu2015regnetwork} incorporates genome wide regulatory network information by taking into account transcription factors, microRNAs and target genes. Binary target-target information from BioGrid$^{3.5}$ \cite{stark2006biogrid} and the target target similarity using the inverse \textit{Jukes-Cantor} distance \cite{jukes1969evolution}, which contains sequence similarity information are similar to those of \cite{bagherian2021coupled}. From NTD-DR, the final fused tensor was used after calculating sequence based similarity, protein protein interactions and gene ontology information.  
\begin{figure}
    \centering
    \includegraphics[width=2.7in ]{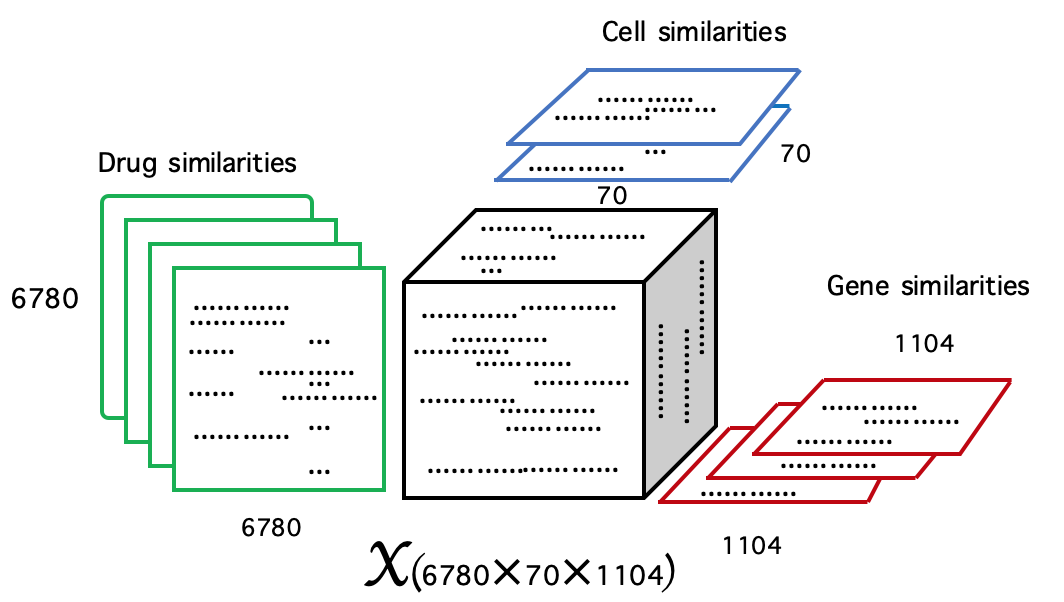}
    \includegraphics[width=2.7in]{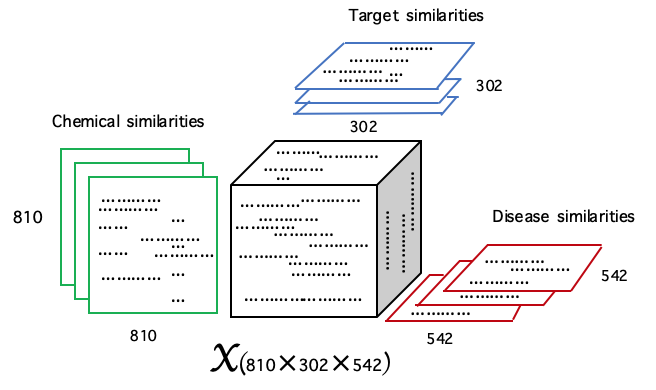}
    \caption{An overview of the LINCS dataset (top) and DTD dataset (bottom) shown with the layers of coupled information. }
    \label{fig:data}
\end{figure}

\subsection{Other state-of-the-art methods}
We compared our proposed method with two classes of tensor completion methods: one that does not utilize any side information, yet is developed for specific datasets such as DTD or LINCS, and one that benefits from incorporating such information, if it exists, but is limited in the amount of information that can be coupled or in the number of modalities of the coupled array, which, in this case, is limited to single matrices.\\
\noindent\textbf{NTD-DR: Non-Negative Tensor Decomposition for Drug Repositioning \cite{jamali2022ntd}} is a graph-regularized-based method which utilizes pairwise associations to construct a three-dimensional tensor whose modes represents  drugs, targets, and diseases. It creates Laplacian matrices as similarity information.\\
\noindent\textbf{{CTRC} \cite{huang2020unified}}: Coupled Tensor Ring Completion is a TR-decomposition based method for coupled completion. It utilizes a block coordinate descent algorithm to optimize the solution.   \\
\noindent\textbf{Cell-specific prediction and application of drug-induced gene expression profiles  \cite{hodos2018cell}}
This method utilizes rather classical approaches for the specific application discussed in the paper. It uses either well-known $k$- nearest neighbor for local imputation or Fast Low Rank Tensor Completion (FaLRTC) method \cite{liu2012tensor}, with specified parameters, as a tensor completion method.\\
\noindent\textbf{HaLRTC: High accuracy low rank tensor completion \cite{liu2012tensor}}
is a tensor completion based on a certain definition of the trace norm for tensors together with convex optimization algorithms. Trace norm can be thought of as the tightest constraint for the rank of a tensor.

\begin{table*}
\rowcolors{2}{white}{black!05!white}
\renewcommand{\arraystretch}{1.1}
\renewcommand{\arraystretch}{1.1}
\centering
\resizebox{1\textwidth}{!}
{
\begin{tabular}{l ||c ||c c|| c c|| c c|| c c ||c c}
\hline
&& \multicolumn{2}{c}{\textbf{HaLRTC}} & \multicolumn{2}{c}{\textbf{CTRC}} & \multicolumn{2}{c}{\textbf{Cell}}  &
\multicolumn{2}{c}{\textbf{NTD-DR}}  &
\multicolumn{2}{c}{\textbf{CTTC} }
\\
Data & m. Rate 
&  $fit$~~ & RSE 
& $fit$ ~~& RSE 
& $fit$ ~~& RSE 
& $fit$ ~~& RSE 
& $fit$~~ & RSE \\
\hline
\hline
& 0.10
&0.80~~
&0.1980$\pm$ 0.001
& {0.97}~~
& {0.0401$\pm$0.002}
&0.92~~
&0.0817$\pm$ 0.003
&0.74~~
&0.2252$\pm$0.045
&\textbf{0.99~~}
&\textbf{0.0038$\pm$ 0.001}
\\
&0.20
&0.82~~
&0.1752$\pm$ 0.001
& {0.94}~~
& {0.0701$\pm$ 0.091}
& 0.89~~
& 0.1074$\pm$ 0.001
& 0.72~~
&0.2625$\pm$ 0.033
&\textbf{0.98~~}
&\textbf{0.0154$\pm$ 0.002}
\\
\textbf{DTD}
& 0.30
&0.62~~
&0.3795$\pm$ 0.005
& {0.93}~~
& {0.0798$\pm$ 0.064}
&0.86~~
&0.1334$\pm$0.001
&0.63~~
&0.3670$\pm$0.037
&\textbf{0.97~~}
&\textbf{0.0557$\pm$ 0.002}
\\
& 0.40 
&0.59~~
&0.4146$\pm$ 0.001
& {0.79}~~
& {0.2123$\pm$ 0.019}
&0.63~~
&0.3714$\pm$ 0.009
&0.62~~
&0.3784$\pm$0.029
&\textbf{0.93~~}
&\textbf{ 0.0711$\pm$ 0.002}
\\
& 0.50
&0.30~~
&0.7071 $\pm$ 0.002
& {0.61}~~
& {0.4002$\pm$ 0.073} 
&0.57~~
&0.4314$\pm$ 0.012
&0.53~~
&0.4694$\pm$0.035
& \textbf{0.95~~}
&\textbf{0.0509$\pm$ 0.031}
\\
\hline
\hline
\hline
%LINCS
& 0.10
&0.69~~
&0.3109$\pm$0.066
& {0.90}~~
& {0.1078$\pm$ 0.003}  
&0.86~~
&0.1395$\pm$0.003
&0.89~~
& 0.1092$\pm$ 0.032
&\textbf{0.99}~~
&\textbf{{0.0080$\pm$0.001}}
\\
& 0.20
&0.57~~
&0.4244$\pm$0.003
& 0.90~~
&   {0.1391$\pm$ 0.002} 
&0.84~~
&0.1541$\pm$0.009
&0.86~~
& 0.1403$\pm$ 0.002
&\textbf{0.99}~~
& \textbf{{0.0113$\pm$0.001}}
\\
\textbf{LINCS}
& 0.30
&0.46~~
&0.5381$\pm$0.007
& 0.89~~
& 0.1936$\pm$ 0.020
&0.81~~
&0.1901$\pm$0.006
&0.83~~
& 0.1728$\pm$ 0.004
&\textbf{0.98}~~
&\textbf{{0.0138$\pm$0.002}}
\\
& 0.40
&0.35~~
&0.6487$\pm$0.001
& 0.84~~
& 0.2590$\pm$0.001
&0.77~~
&0.2269$\pm$0.003
&0.80~~
& 0.2015$\pm$ 0.005
&\textbf{0.98}~~
&\textbf{{0.0160$\pm$0.001}}
\\
& 0.50
&0.02~~
&0.9749$\pm$0.004
& 0.79~~
& 0.2894$\pm$0.003
&0.52~~
&0.4764$\pm$0.004
&0.78~~
& 0.2245$\pm$ 0.005
&\textbf{0.98}~~
&\textbf{{0.0179$\pm$0.002}}
\\
\hline
\hline
\end{tabular}
}
\vspace{0.09in}
\caption{%{\color{blue} 
The comparison of the performance of four tensor completion methods against CTTC over two datasets, LINCS and DTD \cite{jamali2022ntd}. Here, m.Rate denotes the missing rate, i.e. the percentage of the entries that has been held-off. The results are the average over 10 experiments and the standard deviation is shown by the number followed by $\pm$. The metrics used here are Relative Squared Error  (RSE) and the model $fit$. }
%}
\label{tab:res}
\end{table*} 

\begin{table*}
\renewcommand{\arraystretch}{1.4}
\renewcommand{\arraystretch}{1.4}
\centering
\resizebox{0.7\textwidth}{!}
{
\begin{tabular}{||c|| c ||c c c c c||}
\hline
Data
&metric
&{~~\textbf{HaLRTC}~~}
&{~~\textbf{CTRC}~~} 
& {~~\textbf{Cell}~~}  &{~~\textbf{{NTD-DR}}~~} 
&{~~\textbf{CTTC}~~ }
\\
\hline
\hline
{\textbf{DTD}}
&\cellcolor{black!05!white}
run time (s) 
& \cellcolor{black!05!white} 
3.75$\pm$ 0.90
&\cellcolor{black!05!white}
48.91$\pm$ 4.12
&\cellcolor{black!05!white}
28.83 $\pm$ 2.08
& \cellcolor{black!05!white}
4800$\pm$180.0
& \cellcolor{black!05!white}
10.30 $\pm$ 1.11
\\
\hline
\hline
{\textbf{LINCS}}
&\cellcolor{black!05!white}
run time (s)
& \cellcolor{black!05!white}
50.99$\pm$ 1.94
& \cellcolor{black!05!white}
61.01$\pm$ 7.32
&\cellcolor{black!05!white}
70.74$\pm$0.33
&\cellcolor{black!05!white}
5600$\pm$126.0
& \cellcolor{black!05!white}
{{971.55$\pm$60.16}}
\\
\hline
\hline
\end{tabular}}
\vspace{0.09in}
\caption{
The average of the run time for 10 iterations for each method. Note that the HaLRTC and Cell cannot incorporate side information, while the other three can.}
%}
\label{tab:time}
\end{table*}

\begin{table*}[ht]
%\rowcolors{2}{white}{black!05!white}
\renewcommand{\arraystretch}{0.7}
\renewcommand{\arraystretch}{0.7}
\centering
\resizebox{0.4\textwidth}{!}
{
\begin{tabular}{c|| c |c |c c c }

%\rowcolors{1}{*}{white}{black!05!white}
%\rowcolor{black!05!white}
\textbf{m. Rate}& \textbf{metric}&{~~Drugs~~} &{~~Genes~~} &{~~Cell-lines~~} 
\\
\hline
\hline
\multirow{2}{*}{\textbf{0.1}}
&\cellcolor{black!05!white}
RSE
& \cellcolor{black!05!white}
0.103
& \cellcolor{black!05!white}

{0.010}
& \cellcolor{black!05!white}
{0.0974}

\\
&
$fit$
&{0.9}
& {0.99}
& {0.90}
\\
\hline
\hline
\multirow{2}{*}{\textbf{0.2}}
&\cellcolor{black!05!white}
RSE
& \cellcolor{black!05!white} 
{0.145}
&\cellcolor{black!05!white}

{0.012}
&\cellcolor{black!05!white}
{0.164}
\\
&$fit$
& {0.85}
&{0.99}
&{0.84}
\\
\hline
\hline
\multirow{2}{*}{\textbf{0.3}}
&\cellcolor{black!05!white}
RSE
& \cellcolor{black!05!white}
{0.175}
& \cellcolor{black!05!white}
{0.0144}
& \cellcolor{black!05!white}
{0.172}
\\
&
$fit$
&{0.82}
& {0.99}
& {0.83}
\\
\hline
\hline
\end{tabular}}
\vspace{0.09in}
\caption{
Here, using the LINCS dataset, the missing values are chosen only from the set of drugs or sets of genes as opposed to being randomly selected.}
%}
\label{tab:cs}
\end{table*}

\section{Experimental Results}\label{sec:ER}

The related codes for CTTC are developed in MATLAB with partial use of previously developed methods \cite{bagherian2022bilevel, bagherian2024tensor, bagherian2021coupled, chen2013simultaneous}. To evaluate the performance of the proposed
CTTC method, we ran it on the two datasets discussed in Section~\ref{sec:data}.  To this end, we considered multidimensional arrays containing auxiliary information between the entries of each mode of the tensor. We used two metrics to compare the results:  Relative Squared Error  (RSE) and $fit$. Fit is calculated as $1-{\| \T X- \wh{\T X}\|}/{\|\T X\|}$, where $\T X$ is the initial tensor and $\wh{\T X}$ is the recovered tensor by CTTC.
In Table~\ref{tab:res}, ``m. Rate'' denotes the missing rate, which is the percentage of the data being held out. The results reported in Table~\ref{tab:res} are the average over $10$ experiments along with $\pm$ the standard deviation (SD). Overall, CTTC outperforms the other approaches across the whole range of missing rates (Table 1). The second-best method is CTRC. CTTC especially shines on more sparse tensors, likely because it better incorporates the tensor-valued side information.

We have also compared the methods in terms of average run-time and the number of iterations required for the method to converge. The results are reported in Table~\ref{tab:time}. The maximum number of iteration for both CTTC and HaLRTC is set to be $30$. However, if the convergence threshold is met, the algorithms may terminate earlier. Even though CTTC method involves more computation due to additional arrays integrated to the algorithm, as shown in Table~\ref{tab:time}, the method still converges with a low number of iterations and over relatively small run-time.  The algorithm used in Cell has a fixed number of iterations set to 200, yet, the method tends to converge with much fewer iterations. NTD-DR uses 10 fold cross validation and reports the metrics for each fold. The method takes a long time to complete, as reported in Table~\ref{tab:time}. For the CTRC method, the maximum iteration is set to be 100, unless the method reaches the termination threshold, faster. CTTC, CTRC, and HaLRTC are written in MATLAB. The ''Cell'' method is a slight modification of the FaLRTC method \cite{liu2012tensor}, where the authors have used customized $\alpha$ values. This method, too, is developed in MATLAB and uses both maximum number of iteration and threshold. The method NTD-DR is developed in Python\cite{jamali2022ntd}. Since the methods are coded in different programming languages and strategies for determining the number of iterations, the run-time comparisons should be taken with a grain of salt.  Initially, the authors split the testing and training dataset with ratio $0.05/99.95$ and reported the results.

The regularization parameters $0\le \varrho_\ell\le 1$, $\ell=1,2,3$, in Eq.~\eqref{eq:main} are optimized based on the performance of the
algorithm during the execution of CTTC methods. They are highly correlated with the dataset being used both for the main tensor input and for the arrays as auxiliary information. For NTD-DR and LINCS datasets, the triple regularization parameters are set to be $(0.01, 0.01, 0.1)$ and $(0.1,0.1,0.1)$, respectively. Setting either of the parameters $\varrho_\ell$ to zero
simply ignores the role of one of the modes, in case no side information is available for such mode. In Table~\ref{tab:res}, we report the performance evaluation for the CTTC method by adding side information for each mode gradually. It is meant to show the effect of utilizing additional information over the overall performance of the method. For instance, in one of experiments over NTD-DR dataset, for a fixed missing rate as $0.2$, the overall method RSE=$0.0654$. Putting $\varrho_\ell=0$, for one $\ell$ at a time results in higher RSE. Putting all three $\varrho_\ell=0$, meaning no side information is used, resulted in RSE=$0.4190$ and $fit=0.69$. While mainly depending on the nature of the database in use, empirically, smaller values of these regularization parameters lead to a better prediction performance. 

\begin{table*}
%\rowcolors{2}{white}{black!05!white}
\renewcommand{\arraystretch}{1.4}
\renewcommand{\arraystretch}{1.4}
\centering
\resizebox{0.8\textwidth}{!}
{
\begin{tabular}{||c|| c ||c ||c ||c ||c ||c }
\hline
%\rowcolors{1}{*}{white}{black!05!white}
%\rowcolor{black!05!white}
\textbf{m. Rate}& \textbf{metric}&{~~${\varrho_1=0,\varrho_2=0,\varrho_3=0}$~~} &{~~\textbf{$\varrho_1\neq0,~\varrho_2=0,~\varrho_3=0$}~~} & {~~\textbf{$\varrho_1\neq0,~\varrho_2\neq0,~
\varrho_3=0$}~~}  &
{~~\textbf{$\varrho_1\neq0,~\varrho_2\neq0,~\varrho_3\neq0$}~~}
\\
\hline
\hline
\multirow{2}{*}{\textbf{0.1}}
&\cellcolor{black!05!white}
RSE
& \cellcolor{black!05!white}
0.2233
& \cellcolor{black!05!white}
0.0999
&\cellcolor{black!05!white}
0.0690

& \cellcolor{black!05!white}
0.0038
\\
&
$fit$
&0.78
& 0.90
& 0.93
& 0.99
\\
\hline
\hline
\multirow{2}{*}{\textbf{0.2}}
&\cellcolor{black!05!white}
RSE
& \cellcolor{black!05!white} 
0.4190
&\cellcolor{black!05!white}
0.1900
&\cellcolor{black!05!white}
0.1080
& \cellcolor{black!05!white}
0.0154
\\
&$fit$
& 0.69
&0.81
&0.89
& 0.98
\\
\hline
\hline
\end{tabular}}
\vspace{0.09in}
\caption{%{\color{blue}
The effect of adding side information on the performance evaluation of the CTTC method, considering different missing rates over DTD dataset.  }
%}
\label{tab:ell}
\end{table*} 

The CTTC method is flexible and can perform under an arbitrary number of modes for the coupled arrays as side information. This allows the method to freely incorporate additional information, when available, using different data sources. The method itself can be used as a pre-processing method if the side arrays are under-observed. It is also possible to synthetically create side tensors, using pair-wise distances, for instance, when applicable. This approach is more suitable for image data, where in fact the pair-wise distances carry certain (additional) geometrical information  \cite{bagherian2022bilevel}. 

In the case of image data, each pixel in an image can be represented as a vector consisting of RGB values, thus allowing pair-wise distance calculations that are reflective upon the similarity of the color in each pixel. However, in our case we do not have access to such a numerical representation for any mode while not utilizing the similarity matrices. Taking our three mode tensor, each entry in the tensor corresponds to a specific interaction of drug, cell line, and gene, that is binary. The labels of each mode are represented by a non-numeric string. Thus, we have no additional numerical vector other than the position of the value in the tensor and are unable to gain more information to create artificial side tensors while only using the target tensor. 

\subsection{Case Study}
The LINCS dataset allows modeling and prediction of whole cell lines using drug and gene information. Instead of considering a missing rate, which randomly marks a percentage of the data as unknown, we tend to predict missing entries being held out from each set of input, separately. In Table ~\ref{tab:cs}, we reported the RSE and $fit$ when the missing values are either from the set of drugs or genes at different percentages. The results are comparable to those reported in Table~\ref{tab:res} for the same missing rate. This indicates that CTTC can be used to predict drug-induced gene expression even for completely unseen drugs or genes--a very useful capability for drug repurposing. However, we observed that due to the sparsity of cell-line side information, unseen cell line prediction does not lead to meaningful results.

\subsection{Ablation Study}
To assess the individual contributions of distance metric learning and tensor completion, we performed an ablation study by evaluating the impact of each mode-specific metric tensor. Each metric tensor is multiplied by its corresponding regularization parameter $\varrho_\ell$. To isolate the effect of each learned metric, we systematically "switched off" the influence of the metrics by setting their corresponding regularization parameters $\varrho_\ell = 0$ for each mode $\ell = 1, 2, 3$. This configuration corresponds to a tensor completion method that does not use any learned distance functions for the modes. We then reintroduced each metric tensor one at a time and observed the performance improvements. The results of this analysis are presented in Table~\ref{tab:ell}, where we show how progressively incorporating each metric tensor leads to enhanced performance. This ablation study highlights the significant contribution of the learned distance metrics to the overall success of the proposed method. According to the Table~\ref{tab:ell}, if no side information is used, i.e. $\varrho_i=0$ for $i=1,2,3$ the metric involved in the model is simply the Euclidean metric, and the average error of the method is relatively high ($0.2233$ and $0.4190$ for $0.1$ and $0.2$ missing rates, respectively). However, by adding layers of side information in each mode, step by step, the performance of the CTTC model improves drastically. Specifically, adding the information about the drugs $\varrho_1\neq0$, while ignoring the other side information, $\varrho_2=\varrho_3=0$, improves the overall results by updating the distance metric function which is responsible for calculating similarity information about the set of drugs.

\subsection{Time, Complexity, and Sensitivity Analysis}

Considering the core term in the  objective function given in Eq.~\eqref{eq:main}, the time complexity for the tensor-matrix multiplications for a tensor \(\T Z\) of size \(n_1 \times n_2 \times n_3\) and matrix \(\m U^{(\ell)}\in \SL_{n_\ell}\) of size \(n_{\ell} \times n_{\ell}\) is \(\mathcal O(n_{\ell} n_{\ell} n_{- \ell})\), where \(n_{- \ell}\) represents the sizes of the other two dimensions. The overall time complexity for performing the three-mode multiplication and calculating the Mahalanabois norm is \(\mathcal O_1(n_1 n_2 n_3 (n_1 + n_2 + n_3)).\)
Each regularization term \(J_\ell(\m U^{(\ell)})\), on the other hand, involves matrix norms which generally have a time complexity of \(\mathcal O(n_\ell^2)\) for each matrix \(\m U^{(\ell)}\). Therefore, the total time complexity for the regularization terms is \(\mathcal O_2(\sum\limits_{\ell=1}^3 n_\ell^2 ).\) In the distance metric regularization term, the matrix multiplication and trace computation for each term have a time complexity of \(\mathcal O(n_\ell^2)\) for each \(j\), and summing over \(\text{num}_\ell\) gives a complexity of \(O(\text{num}_\ell n_\ell^2)\) for each \(\ell\). Thus, the time complexity for the distance metric regularization is \(
   \mathcal O_3(\text{num}_1 n_1^2 + \text{num}_2 n_2^2 + \text{num}_3 n_3^2).
   \)
Combining the complexities from the tensor operations, regularization terms, and distance metric regularization, the total time complexity is approximately:
\begin{equation}
  \mathcal O\bigg (n_1 n_2 n_3 \sum\limits_{\ell=1}^3 n_\ell  + \sum\limits_{\ell=1}^3 n_\ell^2 \big[ 1 + \text{num}_\ell n_\ell^2 \big ]\bigg).   \nonumber
\end{equation}
In order to analyse the space complexity one may note that the the space required for storing the tensor \(\T Z\) is \(\mathcal O(n_1 n_2 n_3)\) and each matrix \(\m U^{(\ell)}\), requires \(\mathcal O(n^2_\ell)\) space. Therefore, the total space for the matrices is \(\mathcal O(\sum\limits_{\ell=1}^3 n_\ell^2)\).\\
Considering the distance metric and regularization terms, the matricization of tensors \(\m S^{(\ell)}\) along the $j$th mode, i.e., \(\m S^{(\ell)}_{(j)}\) and \(\m D_\ell\) would add \(\mathcal O(\text{num}_\ell n_\ell^2)\) space for each \(\ell=1, 2,3\), depending on how the matrices are stored.

Thus, the total space complexity is approximately
\begin{equation}
 \mathcal O\bigg (n_1 n_2 n_3\sum\limits_{\ell=1}^3  n^2_\ell\big[1+  \text{num}_\ell\big]\bigg).  \nonumber 
\end{equation}
As expected, the ``dominant factor'' in both time and space complexity is the size of the given tensor, i.e., \(n_1 n_2 n_3\) along with any additional scaling factors from the regularization terms or auxiliary data.

The optimal regularization parameters $\varrho_\ell$ for $\ell = 1, 2, 3$ in Eq.~\eqref{eq:main} are selected based on the similarity tensors for each mode, and they are influenced by the sparsity and reliability of the auxiliary information. To evaluate the sensitivity of the proposed method to variations in the regularization parameters $\varrho_\ell$, and to better understand the role of each parameter, we first explore the effect of setting each $\varrho_\ell$ (for $\ell = 1, 2, 3$) to zero. This approach effectively removes the influence of the corresponding metric tensor for the respective mode, allowing us to optimize the remaining parameters independently based on their individual performance. This strategy is feasible because the metric tensors under the distance metric learning framework are treated as independent for each mode. The other parameters involve in the optimization, depending on the iterative process, are optimized through a grid search.

\subsection{Application of CTTC in Drug Repurposing}\label{sec:application}

To further analyze the performance of CTTC in the drug repurposing task, we have conducted a stratified evaluation to control for data heterogeneity and to ensure that the model’s performance reflects generalizability across biologically and chemically relevant subgroups. Stratification, in this context, refers to the systematic partitioning of a dataset into distinct, non-overlapping subgroups (strata), each defined based on specific, predefined characteristics pertinent to biological function, chemical structure, or clinical relevance.

Formally, let the dataset be defined as
\(
D = \{(x_i, y_i)\}_{i=1}^{n},
\)
where \( x_i \in \mathbb{R}^d \) denotes the feature vector associated with the \(i^{\text{th}}\) sample—such as molecular descriptors, drug-target profiles, or gene expression signatures—and \( y_i \in \mathcal{Y} \) represents the corresponding outcome, such as therapeutic response, toxicity level, or predicted interaction score. Stratification entails dividing the dataset \(D\) into \(k\) disjoint subsets, i.e.,
\(
D = \bigcup_{j=1}^{k} D_j \quad \text{with} \quad D_j \cap D_{j'} = \emptyset \quad \text{for } j \ne j',
\)
such that each subset \(D_j\) contains samples that share one or more defining attributes \(A_j\), i.e.,
\(
\forall (x_i, y_i) \in D_j, \quad x_i \in A_j.
\)
This process enables controlled analyses by ensuring that stratifying variables—such as disease subtype, molecular pathway membership, chemical scaffold, or gene family—are appropriately represented within each group. This methodological design supports the evaluation of the model’s consistency, robustness, and capacity to generalize across varied biomedical subdomains, thereby improving the interpretability and trustworthiness of drug discovery outcomes.

In our experimental analysis, we applied this stratification framework to the LINCS dataset while maintaining a fixed missing rate of \(0.2\). To assess performance variability across biologically and chemically meaningful strata, we stratified the data along two axes: (1) 12 distinct chemical structural classes and (2) 520 gene families. The gene family classifications are based on the standardized definitions provided by the HUGO Gene Nomenclature Committee (HGNC)~\cite{hgnc2022gene}. Under the specified missing data condition, CTTC achieved an overall Relative Squared Error  (RSE) of \(0.0126\), indicating high predictive accuracy.

To provide a more granular assessment, we report the model’s performance (RSE) separately for each of the structural classes and gene families. The stratified results are provided in Table~\ref{tab:str}, while a visual summary highlighting gene families with five or more members is presented in the bar plot shown in Figure~\ref{fig:rse_gene_families}. This visualization helps identify consistent patterns of performance across diverse gene groups, shedding light on the biological contexts in which CTTC’s predictions are most reliable. Notably, in Figure~\ref{fig:rse_gene_families}, several gene families—such as those involved in signal transduction, kinase activity, or transcription regulation—demonstrate relatively low RSE values, indicating reliable prediction performance within these biologically significant domains. Conversely, a small number of families exhibit elevated RSE values, which may be attributed to higher functional heterogeneity or limited data density within those subgroups.

Table~\ref{tab:str} presents the group-wise Relative Squared Error (RSE) values obtained by the CTTC model across 12 distinct chemical structural classes from the LINCS dataset. Each group corresponds to a specific chemical functional group or scaffold—such as amines, esters, alkynes, and aromatics—which are fundamental to molecular reactivity and biological activity. As observed, the \textit{Alkyne} class exhibits the lowest RSE at $0.0107$, suggesting that CTTC achieves particularly high fidelity in predicting interactions involving alkyne-containing compounds. Similarly, the model performs well on the \textit{Alcohol}, \textit{Sulfonamide}, and \textit{Ether} groups, each achieving RSEs close to or below $0.0115$. In contrast, the \textit{Ester}, \textit{Alkene}, and \textit{Carboxylic acid} groups yield comparatively higher RSEs, ranging from $0.0133$ to $0.0136$, indicating moderate challenges in learning from these structural patterns. Overall, the stratified RSE analysis demonstrates CTTC’s robustness and adaptability across chemically diverse input spaces, supporting its potential utility in broad-spectrum drug discovery tasks.

 In another experiments over DTD dataset where 30\% of the data are being held out from an entire row. We performed stratification based on disease categories using MeSH terms and stratified targets using Gene Ontology (GO) annotations. The overall RSE is reported as $0.0877$, which is slightly higher than the value reported in Table~\ref{tab:res} for the same missing rate. Stratified analysis revealed that the algorithm's performance varies across specific subgroups, indicating heterogeneous effectiveness in different biological contexts. The accompanying histogram aggregates RSE values for all GO terms with at least five associated targets. Similar trends were observed when analyzing performance across disease categories, with RSE values ranging from 0 to $0.25$. Notably, RSE values of 0 may correspond to cases with no masked entries for the respective class.

\begin{figure*}[!t]
  \centering
  \includegraphics[width=\textwidth]{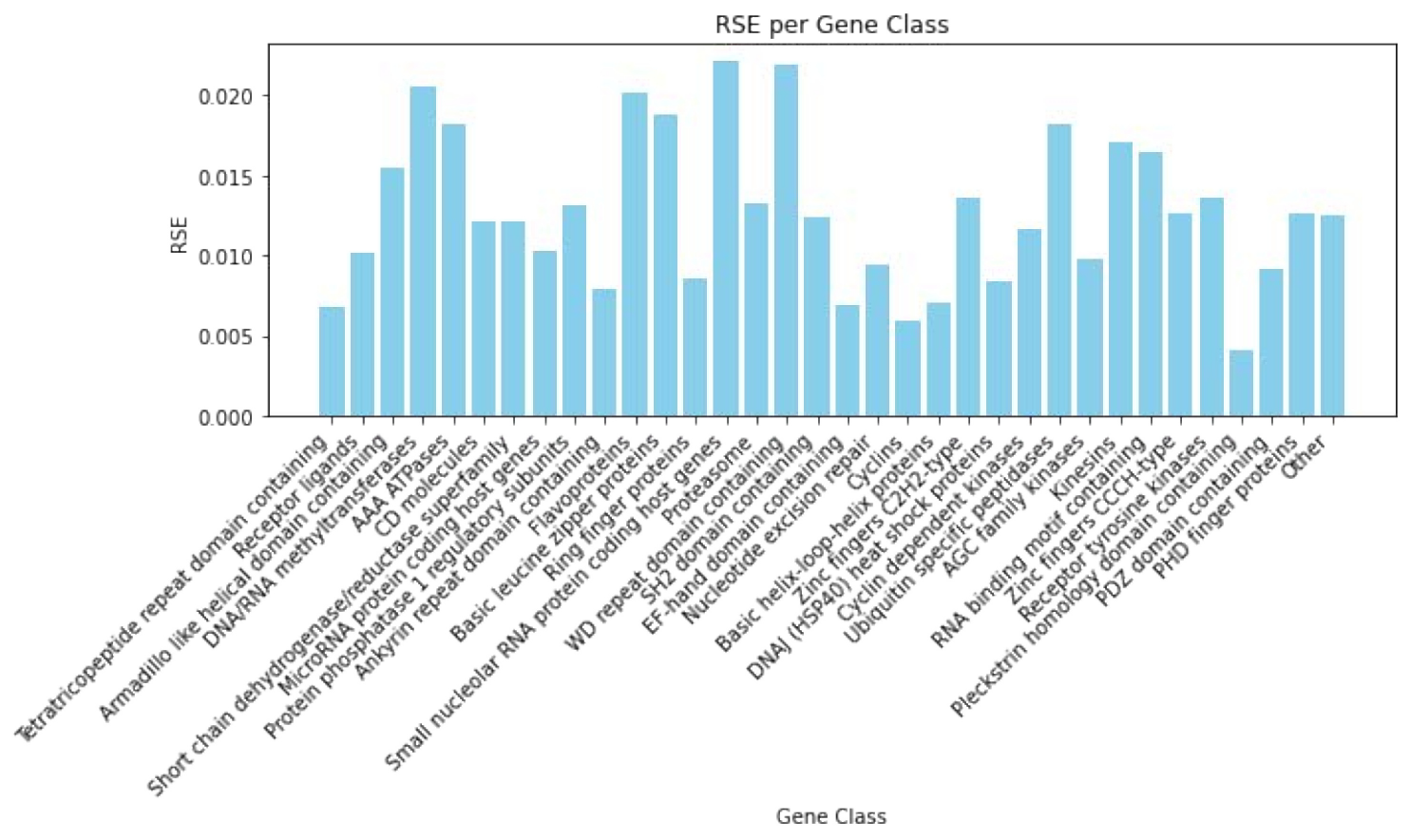}
     \caption{RSE per gene family with at least $5$ members.}
     \label{fig:rse_gene_families}
\end{figure*}

\begin{table*}[ht]
\renewcommand{\arraystretch}{1.6}
\centering
\resizebox{\textwidth}{!}{
\begin{tabular}{|c||c|c|c|c|c|c|c|c|c|c|c|c|}
\hline
\hline
\textbf{Group} & Amine & Ester & Alkyne & Ether & Carboxylic acid & Aromatic & Alcohol & Sulfonamide & Amide & Alkene & Ketone & Phenol \\
\hline
\textbf{RSE} & $0.0131$ & $0.0135$ & $0.0107$ & $0.0115$ & $0.0133 $& $0.0126$ & $0.0114$ & $0.0114$ & $0.0120$ & $0.0136$ & $0.0124$ & $0.0122 $\\
\hline
\hline 
\end{tabular}}
\vspace{0.1in}
\caption{Group-wise Relative Squared Error (RSE) values across $12$ chemical structural classes on the LINCS dataset.}
\label{tab:str}
\end{table*}

\section{Conclusion}\label{sec:conc}
In this manuscript, we introduced a tensor completion method, coined CTTC, and evaluated its performance against four state-of-the-art tensor completion methods on two benchmark datasets.

The CTTC method is highly generalizable and can be applied to a wide range of tasks where data is sparse and contains missing entries. The core strength of CTTC lies in its ability to predict missing values while simultaneously learning an appropriate distance metric to define similarity relationships between entities, such as drugs, targets, or cell lines, through distance metric learning. This dual capability allows for more accurate predictions and better understanding of the underlying data structure.

While we have demonstrated its effectiveness in specific applications such as Drug-Target Interaction (DTI) prediction and cell-line perturbation prediction, the method is not limited to these domains. CTTC is particularly useful in scenarios where multi-relational data—often with missing values—needs to be completed, and auxiliary information can enhance predictive accuracy. The method's flexibility comes from its ability to integrate multiple sources of information, such as structural data, biological features, or external datasets, to guide the completion process. This makes CTTC applicable to a wide array of domains, including but not limited to, drug discovery, biomarker identification, disease modeling, and personalized medicine. Moreover, CTTC can be extended to any task that involves multi-dimensional data with missing entries, where learning from auxiliary information (e.g., disease categories, target types, or compound structural features) can improve the quality of predictions. Its versatility and adaptability to various types of incomplete data make it a powerful tool for real-world applications across diverse fields.

Building upon the proposed framework, an interesting direction for future research is the development of a Riemannian manifold formulation of CTTC, in which the optimization is performed directly on suitable fixed-rank tensor manifolds. Such a formulation would enable the investigation of intrinsic geometric properties of the optimization problem, including geodesic convexity, sectional curvature, and their influence on the optimization landscape and convergence behavior.

\section*{Data Statement \& Supplementary Data}\label{supp}

The following publicly available datasets are used for experimental purposes in thismanuscript: 
\begin{enumerate}[label=(\roman*)]
	\item Library of Integrated Network-based Cellular Signatures (LINCS) dataset {\url{https://lincs.hms.harvard.edu/db/}} 
	\item ChemicalChecker \url{https://pubmed.ncbi.nlm.nih.gov/32440005/} \cite{duran2020extending}
	\item DrugBank \url{https://academic.oup.com/nar/article/34/suppl_1/D668/1132926} \cite{wishart2006drugbank}
	\item  NTD-DR dataset  \url{https://journals.plos.org/plosone/article?id=10.1371/journal.pone.0270852} \cite{jamali2022ntd}
	\item STRING \url{https://pubmed.ncbi.nlm.nih.gov/33237311/}  \cite{szklarczyk2021string}
	\item  RegNetwork \url{https://academic.oup.com/nar/article/54/D1/D1234/8234001} \cite{liu2015regnetwork}
	
\end{enumerate}

\noindent A subset of the processed data used to assess the proposed method, together with the associated code and usage instructions, will be made publicly available in the following GitHub repository upon publication of this paper: \href{https://github.com/mbagherian/CTTC}{https://github.com/mbagherian/CTTC}.

%%%%%%%%%%%%%%%%%%%%%%%%%%%%%%%%%%%%%%%%%%%%%%%%%%%%%%%%%%%%%%%%%%%%%%%%%%%%%%%%%%%%%

%%%%%%%%%%%%%%%%%%%%%%%%%%%%%%%%%%%%%%%%%%%%%%%%%%%%%%%%%%%%%%%%%%%%%%%%%%%%%%%%%%%%%%%%%
%%%%%%%%%%%%%%%%%%%%%%%%%%%%%%%%%%%%%%%%%%%%%%%%%%%%%%%%%%%%%%%%%%%%%
% \bibliographystyle{ieeetr}
 \bibliographystyle{splncs04}
\bibliography{REF_new}

\end{document}